\def\BibTeX{{\rm B\kern-.05em{\sc i\kern-.025em b}\kern-.08em T\kern-.1667em\lower.7ex\hbox{E}\kern-.125emX}}
\DeclareMathOperator{\rank}{rank}
\newcommand{\bbR}{\mathbb{R}}
\newcommand{\bbC}{\mathbb{C}}
\newcommand{\calA}{\mathcal{A}}
\newcommand{\calB}{\mathcal{B}}
\newcommand{\calC}{\mathcal{C}}
\newcommand{\calD}{\mathcal{D}}
\newcommand{\calE}{\mathcal{E}}
\newcommand{\calP}{\mathcal{P}}
\newcommand{\calI}{\mathcal{I}}
\newcommand{\set}[2]{\left\{ #1 ~\left|~ \vphantom{#1} #2 \right. \right\}}
\newcommand{\nset}[1]{\{1,\dots,#1\}}
\newcommand{\qand}{\quad\text{and}\quad}
\newcommand{\bbm}{\begin{bmatrix}}
\newcommand{\ebm}{\end{bmatrix}}
\newcommand{\tstrut}{\rule{0pt}{2.2ex}}
\newcommand{\pattern}{\{0,*,{?}\} }
\newcommand{\sys}{(A,B,C,D)}
\newcommand{\patsys}{{(\calA, \calB, \calC, \calD)}}
\newcommand{\classprod}{$A\in\calP(\calA)$, $B\in \calP(\calB)$, $C\in \calP(\calC)$ and {$D\in \calP(\calD)$}}
\newcommand{\daesys}{(E,A,B)}
\newcommand{\daepatsys}{(\calE,\calA,\calB)}
\newcommand{\daeclassprod}{$E\in\calP(\calE)$, $A\in \calP(\calA)$ and $B\in\calP(\calB)$}
\newcommand{\daeinputs}[1]{C_\textup{p}^{#1}}
\renewcommand{\t}{^\top}
\newcommand{\s}{^\ast}
\newcommand{\inv}{^{-1}}
\newtheorem{lemma}{Lemma}
\newtheorem{proposition}{Proposition}
\newtheorem{remark}{Remark}
\newtheorem{definition}{Definition}
\newtheorem{example}{Example}
\newtheorem{theorem}{Theorem}
\title{\LARGE{Properties of pattern matrices with applications to structured systems}}
\author{B. M. Shali, H. J. van Waarde, M. K. Camlibel, \emph{Member, IEEE}, and H. L. Trentelman, \emph{Fellow, IEEE} %
\thanks{The authors are with the Bernoulli Institute for Mathematics, Computer Science, and Artificial Intelligence, University of Groningen, Nijenborgh 9, 9747 AG, Groningen, The Netherlands. H. J. van Waarde is  also  with  the  Engineering  and  Technology  Institute  Groningen, University  of  Groningen,  Nijenborgh  4,  9747  AG,  Groningen, The Netherlands. (email: \texttt{b.m.shali@rug.nl}, \texttt{h.j.van.waarde@rug.nl}; \texttt{m.k.camlibel@rug.nl}; \texttt{h.l.trentelman@rug.nl}).}}
\begin{document}
\maketitle

\begin{abstract}
	The exact parameter values of mathematical models are often uncertain or even unknown. Nevertheless, we may have access to crude information about the parameters, e.g., that some of them are nonzero. Such information can be captured by so-called pattern matrices, whose symbolic entries are used to represent the available information about the corresponding parameters. In this paper, we focus on pattern matrices with three types of symbolic entries: those that represent zero, nonzero, and arbitrary parameters. We formally define and study addition and multiplication of such pattern matrices. The results are then used in the study of three strong structural properties, namely, controllability of linear descriptor systems, and input-state observability and output controllability of linear systems.
\end{abstract}

%%%%%%%%%%%%%%%%%%%%%%%%%%%%%%%%%%%%%%%%%%%%%%%%%%%%%%%%%%%%%%%%%%%%%%%%%%%%

\section{Introduction}\label{sec:introduction}
The concept of structure was introduced more than 40 years ago by Lin \cite{lin1974} in order to obtain more realistic models of physical systems. Lin considers a linear time-invariant system with a single input, where the numerical entries of the system matrices are not known precisely, but are known to be fixed zeros or arbitrary real numbers. This pattern of fixed zero entries gives the system its \emph{structure}, and is what makes it a \emph{structured system}. Based on this concept of structure, a structured system is said to be \emph{(weakly) structurally controllable} if there exists at least one controllable system with the given structure, i.e., a controllable system where the numerical entries of the system matrices adhere to the pattern of fixed zeros imposed by the structured system. In \cite{lin1974}, Lin characterizes (weak) structural controllability as a graph-theoretic property by associating a graph to the structured system. Following this, there have been a number of papers that deal with structured systems and their properties. The results on (weak) structural controllability have been extended to multi-input systems in \cite{shields1976} and shown to be generic in \cite{glover1976}. Requiring \emph{all} systems with a given structure to be controllable leads to the introduction and characterization of \emph{strong structural controllability} in \cite{mayeda1979, reinschke1992}. More recently, strong structural controllability has been characterized in \cite{jia2020a} for a general class of structured systems whose entries are allowed to be fixed zeros, nonzero or arbitrary real numbers. This is done with the help of a so-called \emph{pattern matrix} with three types of symbolic entries that represents the structure of the structured system.

Some of the most noteworthy applications of the concept of structure are in the context of \emph{networked systems}. The general idea there is that a weighted graph represents the network, where the topology of the graph is known but the precise values of the edge weights are not. Assuming that the edge weights are nonzero real numbers, the networked system can be interpreted as a structured system with a zero/nonzero structure, in the sense that a nonzero entry corresponds to an edge and a zero entry corresponds to the absence of an edge. Then one can study structural properties of the network, i.e., properties that depend solely on its topology. This has already been done for a variety of relevant system theoretic properties, among which are controllability \cite{chapman2013, monshizadeh2014, jia2020b}, \emph{target controllability} \cite{gao2014, li2018, monshizadeh2015, vanwaarde2017, moothedath2019, commault2020} and \emph{input-state observability} \cite{boukhobza2007, kibangou2016, gracy2017, gracy2018}.

In this paper, we will follow the pattern matrix framework as introduced in \cite{jia2020a}. In contrast to \cite{jia2020a}, we will be interested in strong structural controllability of linear descriptor systems, as well as input-state observability and  output controllability of linear systems. We will see that these properties can be characterized elegantly once we have established suitable notions of addition and multiplication for pattern matrices. As such, the main contributions of this paper are as follows. First, we formally define addition and multiplication for pattern matrices and analyse some of their relevant properties. Second, we use these properties to characterize strong structural controllability of linear descriptor systems, and strong structural input-state observability and output controllability of linear systems.

All of these properties have been studied in some form already. Strong structural controllability of linear descriptor systems has been studied in \cite{popli2019} under the name of \emph{selective strong structural controllability}. In comparison, the approach that we take here has the advantages of being conceptually simple and broadly applicable in the study of strong structural properties. In fact, our approach is powerful because it also enables the simple characterization of strong structural output controllability and input-state observability. The latter have been studied in the context of networked systems, where output controllability is typically referred to as target controllability \cite{gao2014, li2018, monshizadeh2015, vanwaarde2017, moothedath2019, commault2020, boukhobza2007, kibangou2016, gracy2017, gracy2018}. However, the results we obtain in this paper are for arbitrary pattern matrices and are thus more general than the results for networked systems, which involve additional structural assumptions \cite{monshizadeh2015, vanwaarde2017, kibangou2016}. Of course, our results can also be used to assess target controllability and input-state observability of networked systems. In this specific setup, we even show (see Example~\ref{ex:target}) that our results on strong structural target controllability can be conclusive while existing results are not.

The outline of this paper is as follows. In Section~\ref{sec:pattern}, we review the concept of pattern matrix and define addition and multiplication for pattern matrices. In Section~\ref{sec:applications}, we apply our results on pattern matrices to the characterization of strong structural controllability of linear descriptor systems, and strong structural input-state observability and output controllability of linear systems. We finish with concluding remarks in Section~\ref{sec:conclusion}.

%%%%%%%%%%%%%%%%%%%%%%%%%%%%%%%%%%%%%%%%%%%%%%%%%%%%%%%%%%%%%%%%%%%%%%%%%%%%

\section{Pattern matrices}\label{sec:pattern}
In this section, we will review the concept of pattern matrix and then further develop it by defining addition and multiplication for pattern matrices. To begin with, a particular type of pattern matrix was introduced in \cite{jia2020a} in order to formalize the idea of matrices whose entries are not known precisely but are known to be zeros, nonzero or arbitrary real numbers. More precisely, a \emph{pattern matrix} is a matrix with entries from the set of symbols $\{0,\ast, {?}\}$, where $\ast$ represents nonzero real numbers and ${?}$ represents arbitrary real numbers. This is captured in the following definition.
\begin{definition}
	The \emph{pattern class} of the pattern matrix $\calA\in\pattern^{m\times n}$ is defined as the set
	\begin{equation*}
		\calP(\calA) = \left\lbrace A\in\bbR^{m\times n} ~\bigg|~
		\begin{aligned}
			A_{ij} = 0 &\text{ if }\calA_{ij} = {0}\\
			A_{ij} \neq 0 &\text{ if } \calA_{ij} = {*}
		\end{aligned}\right\rbrace.
        \vspace{1mm}
	\end{equation*}
\end{definition}
We can define properties of pattern matrices in terms of the properties of the real matrices in their pattern classes. For example, we say that a pattern matrix $\calA$ has full rank if $A$ has full rank for all $A\in\calP(\calA)$. Rank properties will be crucial in the applications to structured systems since most system-theoretic properties are characterized in terms of full rank conditions. Fortunately, conditions under which a pattern matrix has full row rank exist and can be verified using a simple algorithm (see  \cite[Theorem~11, Lemma~21]{jia2020a}). Naturally, one can check if a matrix has full column rank by checking if its transpose has full row rank.

In practice, we will be working with several ``unknown" matrices that belong to the pattern classes of some known pattern matrices. This will naturally lead to expressions involving sums and products. To understand the results of such expressions, we will define a sensible way of adding and multiplying pattern matrices. Here, sensible means that the result of adding and multiplying pattern matrices gives us some useful information on the result of adding and multiplying matrices belonging to their pattern classes.

To this end, given a pair of pattern matrices,  we want the sum of any pair of real matrices from their pattern classes to be contained in the pattern class of the sum of the pattern matrices. We know that the sum of zero and any real number is just the number itself, while the sum of two nonzero real numbers can be any real number. Motivated by this, we define addition for the set $\pattern$ as shown in Table~\ref{tab:addition}. Then addition for pattern matrices is defined element-wise.
\begin{definition}
	Let $\calA,\calB\in\pattern^{m\times n}$. Their sum $\calA+\calB\in\pattern^{m\times n}$ is defined as\vspace{-.7mm}
	\begin{equation*}
		(\calA + \calB)_{ij} = \calA_{ij} + \calB_{ij}\vspace{-.7mm}
	\end{equation*}
	for all $i\in\{1,\dots,m\}$ and $j\in\{1,\dots n\}$.
\end{definition}
\begin{table}[htbp!]
    \centering
    \begin{tabular}{c|ccc}
        $+$ & $0$ & $*$ & ${?}$\\
        \hline
        $0$ & $0$ & $*$ & ${?}$\tstrut\\
        $*$ & $*$ & ${?}$ & ${?}$\\
        ${?}$ & ${?}$ & ${?}$ & ${?}$
    \end{tabular}
    \caption{Addition for the set $\pattern$.}
    \label{tab:addition}
    \vspace{-11mm}
\end{table}

By definition, if $\calA$ and $\calB$ are pattern matrices of the same dimensions, then $\calP(\calA + \calB) \supset \calP(\calA) + \calP(\calB)$, where
\begin{equation*}
	\calP(\calA) + \calP(\calB) = \set{A + B}{A\in\calP(\calA),\  B\in\calP(\calB)}
\end{equation*}
is the Minkowski sum of sets. It turns out that the converse is true as well.
\begin{proposition}\label{prop:patsum}
	If $\calA$ and $\calB$ are pattern matrices of the same dimensions, then $\calP(\calA + \calB) = \calP(\calA) + \calP(\calB)$.
\end{proposition}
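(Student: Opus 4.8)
The plan is to establish the nontrivial inclusion $\calP(\calA + \calB) \subseteq \calP(\calA) + \calP(\calB)$, since the reverse inclusion is already recorded in the paragraph preceding the statement. The key observation is that both pattern addition and the pattern class are defined entry by entry, so I would reduce the whole claim to its scalar version. Writing $\calP(0) = \{0\}$, $\calP(\ast) = \bbR\setminus\{0\}$ and $\calP(?) = \bbR$ for the pattern class of a single symbol, it suffices to show that for all symbols $a,b\in\pattern$ and every $c\in\calP(a+b)$ there exist $x\in\calP(a)$ and $y\in\calP(b)$ with $x+y=c$. Granting this scalar fact, any $M\in\calP(\calA+\calB)$ decomposes entrywise: since $M_{ij}\in\calP\big((\calA+\calB)_{ij}\big) = \calP(\calA_{ij}+\calB_{ij})$, applying the scalar result with $a=\calA_{ij}$ and $b=\calB_{ij}$ yields $A_{ij}\in\calP(\calA_{ij})$ and $B_{ij}\in\calP(\calB_{ij})$ summing to $M_{ij}$; the assembled matrices then satisfy $A\in\calP(\calA)$, $B\in\calP(\calB)$ and $A+B=M$.

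For the scalar statement I would inspect Table~\ref{tab:addition}, organizing the argument by the value of $a+b$. If $a+b=0$, then necessarily $a=b=0$ and $c=0$, so I take $x=y=0$. If $a+b=\ast$, then one summand is $0$ and the other is $\ast$, and $c\neq 0$; I assign $c$ to the nonzero slot and $0$ to the zero slot. If $a+b=?$ and at least one summand (say $a$) equals $?$, I pick any $y\in\calP(b)$ (this set is always nonempty) and set $x=c-y\in\bbR=\calP(?)$. The remaining possibility with $a+b=?$ but neither symbol equal to $?$ forces $a=b=\ast$, which is the one case needing a real idea.

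The main obstacle, then, is precisely the entry $\ast+\ast=?$: here $c$ may be any real number, including $0$, and I must write it as a sum of two \emph{nonzero} reals. This is the crux of the proof and the reason the table declares the sum of two nonzeros to be arbitrary rather than nonzero. I would resolve it by choosing any nonzero $x\in\bbR$ with $x\neq c$ and setting $y=c-x$; then $x\neq 0$ and $y\neq 0$ while $x+y=c$. Such an $x$ exists because $\bbR$ is infinite, so this is the only point where the size of the underlying field enters. Combining this case with the immediate assignments above verifies the scalar statement and hence, via the entrywise reduction, the proposition.
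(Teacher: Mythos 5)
Your proof is correct and takes essentially the same approach as the paper's: both reduce the matrix statement to a scalar claim verified entrywise, then settle the scalar claim by case analysis over the addition table. The only differences are cosmetic — you organize cases by the value of the symbol sum $a+b$ rather than by whether the real entry is zero, and you handle the crux case $\ast+\ast={?}$ with a single unified choice ($x\neq 0$, $x\neq c$, $y=c-x$) where the paper splits into the subcases $c=0$ and $c\neq 0$.
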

\begin{proof}
	The inclusion $\calP(\calA+\calB) \supset \calP(\calA) + \calP(\calB)$ follows from the definition of addition. For the converse inclusion, let $C\in\calP(\calA+\calB)$ and consider an entry $C_{ij}$. The goal is to show that there exist entries $A_{ij} \in \calP(\calA_{ij})$ and $B_{ij}\in\calP(\calB_{ij})$ such that $C_{ij} = A_{ij} + B_{ij}$. We will consider the cases $C_{ij} = 0$ and $C_{ij} \neq 0$ separately.

	Suppose that $C_{ij} = 0$. Then either $(\calA+\calB)_{ij} = 0$ or $(\calA+\calB)_{ij} = {?}$. In the former, we must have that $\calA_{ij} = 0$ and $\calB_{ij} = 0$, hence $A_{ij} = 0$ and $B_{ij} = 0$ would work. In the latter, there are three cases:
	\begin{enumerate}
		\item If $\calA_{ij}, \calB_{ij} \in\{\ast, {?}\}$, then $-A_{ij}= B_{ij} = 1$.
		\item If $\calA_{ij} = 0$ and $\calB_{ij} = {?}$, then $A_{ij} = B_{ij} = 0$.
		\item If $\calA_{ij} = {?}$ and $\calB_{ij} = {0}$, then $A_{ij} = B_{ij} = 0$.
	\end{enumerate}
	Suppose that $C_{ij} \neq 0$. Then either $(\calA+\calB)_{ij} = *$ or $(\calA+\calB)_{ij} = {?}$. In the former, exactly one of $\calA_{ij}$ and $\calB_{ij}$ is $*$ and the other one is $0$, hence we can pick either $A_{ij} = C_{ij}$ and $B_{ij} = 0$, or $A_{ij}=0$ and $B_{ij} = C_{ij}$. In the latter, there are three cases again:
	\begin{enumerate}
		\item If $\calA_{ij}, \calB_{ij} \in\{\ast, {?}\}$, then $A_{ij}= B_{ij} = \frac{1}{2}C_{ij}$.
		\item If $\calA_{ij} = 0$ and $\calB_{ij} = {?}$, then $A_{ij} = 0$ and $B_{ij} = C_{ij}$.
		\item If $\calA_{ij} = {?}$ and $\calB_{ij} = {0}$, then $A_{ij} = C_{ij}$ and $B_{ij} = 0$.
	\end{enumerate}
	The element $C_{ij}$ was chosen arbitrarily, hence we can always find matrices $A\in\calP(\calA)$ and $B\in\calP(\calB)$ such that $A+B = C$ and thus $\calP(\calA+\calB) \subset \calP(\calA) + \calP(\calB)$.
\end{proof}

In the same vein, we now turn to the definition of multiplication for pattern matrices. Note that the product of zero and any real number is just zero, while the product of two nonzero real numbers is always a nonzero real number. This motivates the definition of multiplication for the set $\pattern$ shown in Table~\ref{tab:multiplication}.
\begin{table}[htbp!]
	\centering
    \vspace{-2mm}
	\begin{tabular}{c|ccc}
		$\cdot$ & $0$ & $*$ & ${?}$\tstrut\\
		\hline
		$0$ & $0$ & $0$ & $0$\tstrut\\
		$*$ & $0$ & $*$ & ${?}$\\
		${?}$ & $0$ & ${?}$ & ${?}$
	\end{tabular}
	\caption{Multiplication for the set $\pattern$.}
	\label{tab:multiplication}
	\vspace{-7mm}
\end{table}
Then we can define pattern matrix multiplication in the usual way.
\begin{definition}
	Let $\calA\in\pattern^{m\times p}$ and $\calB\in\pattern^{p\times n}$. Their product $\calA\calB\in\pattern^{m\times n}$ is defined as\vspace{-1mm}
	\begin{equation*}
		(\calA\calB)_{ij} = \sum_{k=1}^{p} \calA_{ik}\calB_{kj}	\vspace{-1mm}
	\end{equation*}
	for all $i\in\nset{m}$ and $j\in\nset{n}$.
\end{definition}

By definition, if $\calA$ and $\calB$ are of appropriate dimensions, then $\calP(\calA\calB)\supset\calP(\calA)\calP(\calB)$, where \vspace{-1mm}
\begin{equation*}
	\calP(\calA)\calP(\calB) = \set{AB}{A\in\calA,\ B \in \calB}.\vspace{-1mm}
\end{equation*}
Unfortunately, the converse is generally not true. When multiplying matrices with at least two rows or columns, we typically create dependencies between the entries of the product. These dependencies cannot be inferred from the product of pattern matrices.
\begin{example}
	Consider the pattern vectors $\calA = \bbm * & * \ebm \t$ and $\calB = \bbm * & * \ebm$. It is easy to see that
	\begin{equation*}
		\calA\calB = \bbm * & * \\ * & * \ebm \qand \bbm 1 & 1 \\ 1 & 2 \ebm\in\calP(\calA\calB).
	\end{equation*}
	Note that the latter is a matrix of rank 2 and thus it cannot be written as the outer product of two vectors. In other words, the fact that the columns (or rows) of $AB$, where $A\in\calP(\calA)$ and $B\in\calP(\calB)$, are linearly dependent cannot be inferred from the product $\calA\calB$.
\end{example}

Although the equality $\calP(\calA\calB) = \calP(\calA)\calP(\calB)$ does not hold in general, there are special cases of $\mathcal{A}$ and $\mathcal{B}$ for which equality does hold. A notable special case is the one where either $\mathcal{A}$ or $\mathcal{B}$ is the ``identity" pattern matrix $\mathcal{I}$ of appropriate dimensions, defined as a diagonal matrix with $*$'s on the diagonal.
%$$
%\mathcal{I} := \begin{bmatrix}
%\ast & 0 & \cdots & 0 \\
%0 & \ast & \cdots & 0 \\
%\vdots & \vdots & \ddots & \vdots \\
%0 & 0 & \cdots & \ast
%\end{bmatrix}.
%$$
Indeed, suppose that $\mathcal{A} = \mathcal{I}$. It is not difficult to see that $\mathcal{I} \mathcal{B} = \mathcal{B}$ and $\mathcal{P}(\mathcal{I})\mathcal{P}(\mathcal{B}) = \mathcal{P}(\mathcal{B})$, and thus $\calP(\mathcal{I}\calB) = \mathcal{P}(\mathcal{B}) = \calP(\mathcal{I})\calP(\calB)$. In the case where $\mathcal{B} = \mathcal{I}$, we can prove the equality in an analogous way.

Finally, we provide the following lemma, which will prove to be very useful when considering the applications to structured systems in the next section.
\begin{lemma}\label{lem:rank_lambda}
		Let $\calA,\calB\in\pattern^{m\times n}$. Then $A-\lambda B$ has full rank for all $A\in\calP(\calA)$, $B\in\calP(\calB)$ and nonzero $\lambda\in\bbC$ if and only if $\calA + \calB$ has full rank.
\end{lemma}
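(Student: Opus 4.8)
The plan is to prove the two implications separately and to isolate, as the single nontrivial ingredient, the fact that the full-rank property of a pattern matrix does not depend on whether we work over $\bbR$ or over $\bbC$.

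I would first dispatch the implication ``pencil condition $\Rightarrow$ $\calA+\calB$ full rank'', which is essentially Proposition~\ref{prop:patsum}. Given any $C\in\calP(\calA+\calB)$, Proposition~\ref{prop:patsum} lets me write $C=A+B$ with $A\in\calP(\calA)$ and $B\in\calP(\calB)$, so that $C=A-\lambda B$ for the nonzero value $\lambda=-1$. The pencil hypothesis then forces $C$ to have full rank; since $C$ is real and was arbitrary in $\calP(\calA+\calB)$, this is exactly the assertion that $\calA+\calB$ has full rank.

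For the converse I would fix $A\in\calP(\calA)$, $B\in\calP(\calB)$ and $\lambda\in\bbC\setminus\{0\}$ and show in two steps that $A-\lambda B$ has full rank. The first step is an entrywise check, parallel to the proof of Proposition~\ref{prop:patsum}, that $A-\lambda B$ respects the symbolic pattern of $\calA+\calB$: where $(\calA+\calB)_{ij}=0$ both $A_{ij}$ and $B_{ij}$ vanish; where $(\calA+\calB)_{ij}={?}$ nothing is required; and where $(\calA+\calB)_{ij}=*$ exactly one of $\calA_{ij},\calB_{ij}$ is $*$ and the other is $0$, so the entry of $A-\lambda B$ equals $A_{ij}$ or $-\lambda B_{ij}$ and is nonzero precisely because $\lambda\neq 0$. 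Hence $A-\lambda B$ is a complex matrix carrying the pattern of $\calA+\calB$, and it only remains to see that every such complex matrix inherits full rank from the (real) hypothesis on $\calA+\calB$.

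The hard part is this last point: that full rank over $\bbR$ is equivalent to full rank over $\bbC$ for a fixed pattern. In the square case I would argue directly that a pattern matrix is full rank over an infinite field if and only if its determinant, viewed as a multilinear polynomial in the free entries, collapses up to sign to a single monomial in $*$-entries; the ``only if'' comes from solving one linear equation to annihilate the determinant while keeping the $*$-entries nonzero (a nonempty Zariski-open condition over any infinite field), and the ``if'' is clear since a single $*$-monomial is a product of nonzero numbers. This criterion reads off the symbols alone and is therefore insensitive to the field, giving the desired equivalence. For rectangular patterns I would instead invoke the combinatorial full-row-rank characterization of \cite[Theorem~11, Lemma~21]{jia2020a}, applied to the transpose for full column rank: being a test on the pattern, it certifies full rank over $\bbR$ and $\bbC$ at once. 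Combined with the first step this yields full rank of $A-\lambda B$, and I expect essentially all the difficulty to concentrate in verifying this field-independence, the surrounding reductions being routine.
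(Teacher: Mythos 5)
Your forward implication is the paper's own (fix $\lambda=-1$ and invoke Proposition~\ref{prop:patsum}), and the first step of your converse---the entrywise check that $A-\lambda B$ is a complex matrix carrying the pattern of $\calA+\calB$---is correct. Your square-case argument is also essentially sound: over any infinite field a square pattern is full rank for all members of its class if and only if its determinant polynomial is, up to sign, a single monomial in $*$-entries, and this criterion is visibly field-independent. Note, however, that this is a genuinely different route from the paper, which never proves any $\bbR$-versus-$\bbC$ field-independence statement. Instead, given nonzero $z\in\bbC^m$ with $z^\ast A-\lambda z^\ast B=0$, the paper writes $z=x+iy$, picks $\alpha\in\bbR$ avoiding finitely many ratios so that $\hat z=x+\alpha y$ reproduces the zero patterns of $z$, $z^\ast A$ and $z^\ast B$, uses $\lambda\neq 0$ to conclude that $z^\ast A$ and $z^\ast B$ have the same zero pattern, and then builds a real diagonal $\Delta\in\calP(\calI)$ with $\hat z^\top(A+B\Delta)=0$; since $B\Delta\in\calP(\calB)$, the matrix $A+B\Delta$ is a \emph{real} member of $\calP(\calA+\calB)$, so the hypothesis forces $\hat z=0$ and hence $z=0$.

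The genuine gap is your rectangular case, which is the case that actually matters in all of the paper's applications ($\bbm \calE & \calB \ebm$, $\bbm \calA & \calB \ebm$, and the block matrices in Theorem~\ref{thm:ssiso} are all non-square). The justification ``being a test on the pattern, it certifies full rank over $\bbR$ and $\bbC$ at once'' is a non sequitur: a condition proved equivalent to full row rank of the \emph{real} pattern class is not automatically equivalent to full row rank of the complexified class, because real and complex solvability of polynomial systems differ in general (compare $x^2+y^2=0$), and rank deficiency of a rectangular member means \emph{all} maximal minors vanish simultaneously, so your square-case trick of annihilating a single determinant by solving one linear equation does not extend. What you actually need is the implication ``all real members of $\calP(\calA+\calB)$ have full row rank $\Rightarrow$ every complex matrix with that pattern has full row rank,'' and that is precisely the hard content of the lemma. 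Citing \cite[Theorem~11, Lemma~21]{jia2020a} does not discharge it unless you verify that the sufficiency half of that characterization is established for complex members as well---the present paper invokes that result only for the real class. So the step you yourself identify as ``the hard part'' is left unproven in the main case; the paper's realification-plus-$\Delta$-scaling argument is exactly the ingredient that fills it, and if you completed your route you would in fact obtain a stronger standalone theorem (field-independence of pattern full rank) than the lemma requires.
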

\begin{proof}
	Suppose that $A - \lambda B$ has full rank for all $A\in\calP(\calA)$, $B\in\calP(\calB)$ and nonzero $\lambda\in\bbC$. Fixing $\lambda = -1$ shows that $A+B$ has full rank for all $A\in\calP(\calA)$, $B\in\calP(\calB)$. But this is equivalent to $C$ having full rank for all $C\in\calP(\calA) + \calP(\calB)$, hence $\calA + \calB$ has full rank due to Proposition~\ref{prop:patsum}.

	Conversely, suppose that $\calA + \calB$ has full rank. We will only treat the case where $m\leq n$ since the case where $n>m$ follows the same reasoning after $\calA$ and $\calB$ are transposed.  With this in mind, let $z\in\bbC^m$ be such that $z\s A - \lambda z\s B = 0$. The goal is to show that $z$ must be the zero vector. Write $z = x + iy$, where $x,y\in\bbR^m$ and $i$ denotes the imaginary unit, and consider $\hat z = x + \alpha y$ with $\alpha\in\bbR$ such that
	\begin{align}
		\alpha&\notin\set{\frac{x_k}{y_k}}{y_k\neq 0,\ k = 1,2,\dots,m}\label{eq:alpha1},\\
		\alpha&\notin\set{\frac{(x\t A)_k}{(y\t A)_k}}{(y\t A)_k\neq 0,\ k = 1,2,\dots,m}\label{eq:alpha2},\\
		\alpha&\notin\set{\frac{(x\t B)_k}{(y\t B)_k}}{(y\t B)_k\neq 0,\ k = 1,2,\dots,m}\label{eq:alpha3}.
	\end{align}
	Note that \eqref{eq:alpha1} implies that $z_k = 0$ if and only if $\hat z_k = 0$. Similarly, \eqref{eq:alpha2} and \eqref{eq:alpha3} imply that $(z\s A)_k = 0$ if and only if $(\hat z\t A)_k = 0$, and $(z\s B)_k = 0$ if and only if $(\hat z\t B)_k = 0$. Furthermore, since $\lambda \neq 0$ and $z\s A = \lambda z\s B$, we have that $(z\s A)_k = 0$ if and only if $(z\s B)_k = 0$, hence $(\hat z\t A)_k = 0$ if and only if $(\hat z\t B)_k = 0$. Therefore, the diagonal matrix $\Delta\in\bbR^{n\times n}$ defined as
	\begin{equation*}
	\Delta_{kk} = \begin{dcases*}
	-1& if $(\hat z\t B)_k = 0$,\\
	-\frac{(\hat z\t A)_k}{(\hat z\t B)_k}& otherwise,
	\end{dcases*}
	\end{equation*}
	is a member of $\mathcal{P}(\mathcal{I})$ and is such that $\hat z\t (A + B\Delta) = 0$. Since $\mathcal{P}(\mathcal{B}) \mathcal{P}(\mathcal{I}) = \mathcal{P}(\mathcal{B})$, it holds that $B\Delta \in\calP(\calB)$ and thus $A + B\Delta \in \calP(\calA+\calB)$ due to Proposition~\ref{prop:patsum}. Then $A+B\Delta$ has full row rank, which implies that $\hat z = 0$ and thus $z = 0$ because of \eqref{eq:alpha1}. This proves the lemma.
\end{proof}

%%%%%%%%%%%%%%%%%%%%%%%%%%%%%%%%%%%%%%%%%%%%%%%%%%%%%%%%%%%%%%%%%%%%%%%%%%%%

\section{Applications}\label{sec:applications}
In this section, we will show how $\{0,\ast,?\}$ pattern matrices can be used to characterize properties of structured systems. This has already been done for strong structural controllability in \cite{jia2020a}. There it is shown that a structured system is strongly structurally controllable if and only if two system related pattern matrices have full row rank. As the latter can be checked using a simple algorithm, this provides a way to verify strong structural controllability for a given structured system. Here, we will extend the work of \cite{jia2020a} by studying strong structural controllability of linear descriptor systems. In addition, we will also study strong structural input-state observability and output controllability of linear systems. As we will see, addition and multiplication of pattern matrices will play an important role in the study of each of these three properties.

\subsection{Controllability of linear descriptor systems}\label{subsec:dae}
In this subsection, we will extend the results on strong structural controllability from \cite{jia2020a} to linear descriptor systems. Let $\daesys$ denote the system
\begin{equation}\label{eq:dae_system}
	E\dot x(t) = Ax(t) + Bu(t),
\end{equation}
where $t\geq 0$, $x(t)\in\bbR^n$ is the state, $u(t)\in\bbR^m$ is the input, $E\in\bbR^{n\times n}$, $A\in\bbR^{n\times n}$ and $B\in\bbR^{n\times m}$. The system $(E,A,B)$ is called \emph{regular} if $\lambda E-A$ is invertible for some $\lambda\in\bbC$. Regularity of $\daesys$ guarantees the existence and uniqueness of solutions to \eqref{eq:dae_system}, given an initial state and a sufficiently differentiable input (see \cite{dai1989}), hence regularity of $\daesys$ is a desirable property. Typically, the matrix $E$ is singular and thus \eqref{eq:dae_system} puts algebraic constraints on the state. This leads to $\daesys$ having special features that are not found in systems in which the state is not constrained algebraically. Among these are impulse terms, consistent initial conditions, input derivatives in the state trajectory, noncausality, etc. Consequently, there are different kinds of controllability notions defined for $\daesys$, some of which make sense only in the presence of algebraic constraints. We will not go into the analysis of descriptor systems, and will instead focus on a particular definition of controllability and its characterization, as presented in \cite{dai1989}. To this end, let $x(t;x_0, u)$ denote the state trajectory at time $t\geq 0$ for the initial condition $x(0) = x_0\in\bbR^n$ and input $u$.
\begin{definition}{}
	The regular system $\daesys$ is \emph{controllable} if for any $T>0$, $x_0\in\bbR^n$ and $x_1\in\bbR^n$, there exists an input function\footnote{The input is assumed to be in $\daeinputs{h-1}$, the class of ($h-1$)-times piecewise continuously differentiable functions. Here $h$ denotes the index of the descriptor system (see \cite[Chapter~1]{dai1989}).} $u\in \daeinputs{h-1}$ such that $x(T; x_0, u) = x_1$.
\end{definition}

Then we have the following characterization of controllability for regular $\daesys$.
\begin{theorem}\cite[Theorem~2-2.1]{dai1989}
	The regular system $\daesys$ is controllable if and only if
	\begin{equation}\label{eq:rankEB}
		\rank\bbm E & B \ebm = \rank\bbm A - \lambda E & B \ebm = n
	\end{equation}
	for all $\lambda \in\bbC$.
\end{theorem}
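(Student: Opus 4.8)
The plan is to reduce the descriptor system to a decoupled pair of subsystems by exploiting regularity, and then to match the controllability of each subsystem to one of the two rank conditions in \eqref{eq:rankEB}. Since $\daesys$ is regular, the pencil $\lambda E - A$ is regular, so there exist nonsingular $P,Q\in\bbR^{n\times n}$ bringing it into Weierstrass--Kronecker form: $PEQ = \bbm I & 0 \\ 0 & N \ebm$ and $PAQ = \bbm A_1 & 0 \\ 0 & I \ebm$, where $A_1\in\bbR^{n_1\times n_1}$ and $N\in\bbR^{n_2\times n_2}$ is nilpotent of index $h$, with $n_1 + n_2 = n$. Writing $PB = \bbm B_1 \\ B_2 \ebm$ and splitting the transformed state $Q\inv x = \bbm x_1 \\ x_2 \ebm$ accordingly, the system separates into a slow subsystem $\dot x_1 = A_1 x_1 + B_1 u$ and a fast subsystem $N\dot x_2 = x_2 + B_2 u$. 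The claim I would establish is that complete controllability of $\daesys$ is equivalent to the conjunction of controllability of the slow subsystem and full reachability of the fast subsystem, and that these two conditions correspond respectively to the second and first rank conditions in \eqref{eq:rankEB}.

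For the slow subsystem, I would invoke the classical theory: $\dot x_1 = A_1 x_1 + B_1 u$ is controllable if and only if $\rank\bbm A_1 - \lambda I & B_1 \ebm = n_1$ for every $\lambda\in\bbC$ (the Popov--Belevitch--Hautus test). For the fast subsystem, the key observation is that $x_2$ is not a dynamical variable but is determined algebraically by the input: iterating $N\dot x_2 = x_2 + B_2 u$ and using $N^h = 0$ gives $x_2(t) = -\sum_{i=0}^{h-1} N^i B_2 u^{(i)}(t)$, which is precisely why the admissible inputs are taken in $\daeinputs{h-1}$. Hence the set of reachable values of $x_2(T)$ is the image of $\bbm B_2 & N B_2 & \cdots & N^{h-1}B_2 \ebm$, and the fast subsystem attains every target in $\bbR^{n_2}$ if and only if this matrix has full row rank, i.e.\ the pair $(N,B_2)$ is controllable. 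Because the two subsystems are driven by a common input whose value and derivatives at time $T$ can be prescribed independently, $\daesys$ is completely controllable exactly when both subsystem conditions hold.

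It then remains to translate these two conditions back into \eqref{eq:rankEB} using invariance of rank under left and right multiplication by the nonsingular matrices $P$, $Q$, and $\bbm Q\inv & 0 \\ 0 & I \ebm$. For the second condition, the block structure gives $\rank\bbm A - \lambda E & B \ebm = \rank\bbm A_1 - \lambda I & 0 & B_1 \\ 0 & I - \lambda N & B_2 \ebm$; since $N$ is nilpotent, $I - \lambda N$ is invertible for every $\lambda$, so the lower block always contributes full rank and the condition collapses to the PBH test for $(A_1,B_1)$. For the first condition, the same manipulation yields $\rank\bbm E & B \ebm = n_1 + \rank\bbm N & B_2 \ebm$, so $\rank\bbm E & B \ebm = n$ is equivalent to $\rank\bbm N & B_2 \ebm = n_2$; applying the PBH test to $(N,B_2)$ and using that $N - \lambda I$ is invertible for $\lambda\neq 0$ shows this is in turn equivalent to controllability of $(N,B_2)$, matching the fast-subsystem requirement.

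The step I expect to be the main obstacle is making the fast-subsystem analysis fully rigorous, in particular justifying that complete controllability from an \emph{arbitrary}, possibly inconsistent, $x_0$ to an arbitrary $x_1$ really does decouple into the two subsystem conditions. This requires care about the consistent initial manifold and the impulsive jump behaviour that an arbitrary $x_0$ forces on $x_2$ at $t=0$, as well as confirming that the freedom in choosing $u$ together with its derivatives up to order $h-1$ at the terminal time $T$ is exactly what is needed to reach any point in the image of $\bbm B_2 & \cdots & N^{h-1}B_2 \ebm$ without disturbing the independently controlled slow part.
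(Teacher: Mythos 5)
Your proposal is correct in outline, but note that the paper does not actually prove this statement: it is quoted verbatim from \cite[Theorem~2-2.1]{dai1989}, and the argument you give --- Weierstrass--Kronecker decomposition into a slow subsystem $\dot x_1 = A_1 x_1 + B_1 u$ and a fast subsystem $N\dot x_2 = x_2 + B_2 u$, the PBH test for the slow part, the nilpotent iteration $x_2 = -\sum_{i=0}^{h-1}N^i B_2 u^{(i)}$ for the fast part, and the rank translation via invertibility of $I - \lambda N$ --- is essentially the standard proof found in that reference. The technical point you flag at the end is real but handled by a standard superposition argument: steer the slow subsystem using an input that is smooth and compactly supported in the open interval $(0,T)$ (which a controllable pair always admits), and add to it a smooth input with prescribed derivatives $u^{(i)}(T)$ whose effect on the slow state is absorbed into the steering target by linearity, so the two subsystem requirements can indeed be met simultaneously.
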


Now, suppose that $E$, $A$ and $B$ are not known precisely but are known to belong to the pattern classes of some known pattern matrices. In other words, we know that \daeclassprod{} for given pattern matrices $\calE \in \{0,\ast,?\}^{n \times n}$, $\calA \in \{0,\ast,?\}^{n \times n}$ and $\calB \in \{0,\ast,?\}^{n \times m}$. This naturally leads to a family of systems as $E$, $A$ and $B$ range over the respective pattern classes. This family is completely characterized by $\calE$, $\calA$ and $\calB$, hence we denote it by $\daepatsys$ and refer to it as a \emph{structured system}.

We are interested in conditions under which all regular $(E,A,B) \in \mathcal{P}(\mathcal{E}) \times \mathcal{P}(\mathcal{A}) \times \mathcal{P}(\mathcal{B})$ are controllable. This motivates the following definition.
\begin{definition}
	The structured system $\daepatsys$ is \emph{regularly strongly structurally controllable} if all regular systems $(E,A,B) \in \mathcal{P}(\mathcal{E}) \times \mathcal{P}(\mathcal{A}) \times \mathcal{P}(\mathcal{B})$ are controllable.
\end{definition}

Making use of the results from Section~\ref{sec:pattern}, we now have the following theorem that provides a sufficient condition for regular strong structural controllability.
\begin{theorem}\label{thm:ssdaec}
	The rank conditions \eqref{eq:rankEB} hold for all $(E,A,B) \in \mathcal{P}(\mathcal{E}) \times \mathcal{P}(\mathcal{A}) \times \mathcal{P}(\mathcal{B})$ if and only if %the pattern matrices
	\begin{equation*}
		\bbm \calE & \calB \ebm,\quad \bbm \calA &\calB \ebm \quad\text{and}\quad 	\bbm \calA+\calE & \calB \ebm
	\end{equation*}
	have full row rank. Moreover, if these pattern matrices have full row rank then $\daepatsys$ is regularly strongly structurally controllable.
\end{theorem}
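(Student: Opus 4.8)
The plan is to prove the main "if and only if" equivalence by translating each of the three rank conditions in \eqref{eq:rankEB} into a full-row-rank statement about an appropriate pattern matrix, and then to obtain the final ("moreover") sufficiency claim as an immediate corollary. The conditions in \eqref{eq:rankEB} are that $\rank\bbm E & B\ebm = n$, and $\rank\bbm A-\lambda E & B\ebm = n$ for \emph{all} $\lambda\in\bbC$. Since the matrices are $n\times(n+m)$, these are all full-row-rank conditions, and the natural strategy is to show that quantifying over $E\in\calP(\calE)$, $A\in\calP(\calA)$, $B\in\calP(\calB)$ and $\lambda\in\bbC$ matches exactly the full-row-rank statement for the three pattern matrices listed.

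First I would dispose of the two $\lambda$-independent pieces. The condition $\rank\bbm E & B\ebm = n$ for all $E\in\calP(\calE)$ and $B\in\calP(\calB)$ is by definition precisely the statement that the pattern matrix $\bbm \calE & \calB\ebm$ has full row rank; here one uses the observation that the pattern class of a block pattern matrix is the set of real block matrices whose blocks range independently over the respective pattern classes. Next, the $\lambda=0$ instance of the second condition gives $\rank\bbm A & B\ebm = n$ for all $A,B$, which is exactly full row rank of $\bbm\calA & \calB\ebm$. The remaining content is the family of conditions for \emph{nonzero} $\lambda$, and this is where Lemma~\ref{lem:rank_lambda} does the work.

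The key step is to handle the nonzero-$\lambda$ conditions. I would fix $B\in\calP(\calB)$ and apply Lemma~\ref{lem:rank_lambda} to the augmented pattern matrices $\calA' = \bbm\calA & \calB\ebm$ and $\calE' = \bbm\calE & 0\ebm$, where $0$ denotes the zero pattern block of size $n\times m$. Observe that $A' - \lambda E' = \bbm A-\lambda E & B\ebm$ as $A'$ ranges over $\calP(\calA')$ and $E'$ over $\calP(\calE')$ (the second block of $\calE'$ is forced to zero, so the $B$-block is unconstrained by $\calE'$ and free to be any element of $\calP(\calB)$). By Lemma~\ref{lem:rank_lambda}, $A'-\lambda E'$ has full row rank for all such $A'$, $E'$ and nonzero $\lambda$ if and only if $\calA'+\calE' = \bbm\calA+\calE & \calB\ebm$ has full row rank, using that $\calB+0 = \calB$ under the addition of Table~\ref{tab:addition}. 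This converts the only genuinely $\lambda$-dependent requirement into full row rank of the third listed pattern matrix, and combining the three equivalences proves the main statement.

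The main obstacle I anticipate is bookkeeping rather than a deep difficulty: I must verify carefully that the pattern class of a horizontally stacked pattern matrix equals the corresponding set of horizontally stacked real matrices with independently varying blocks (so that "for all $E,B$" really is the full-row-rank condition on $\bbm\calE & \calB\ebm$), and that padding $\calE$ with a zero block does not spuriously constrain $B$ when I invoke Lemma~\ref{lem:rank_lambda}. Once the equivalence is established, the "moreover" claim is immediate: regular strong structural controllability asks only that every \emph{regular} $(E,A,B)$ in the pattern classes be controllable, whereas the rank equivalence guarantees \eqref{eq:rankEB} for \emph{all} $(E,A,B)$ in the pattern classes; hence in particular every regular triple satisfies \eqref{eq:rankEB} and is therefore controllable by \cite[Theorem~2-2.1]{dai1989}. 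I should emphasize that this direction is only a sufficient condition because the pattern-matrix rank conditions enforce the full-rank requirements on all triples, including possibly non-regular ones, so the converse of the "moreover" statement need not hold.
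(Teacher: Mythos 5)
Your proof is correct and follows essentially the same route as the paper: split \eqref{eq:rankEB} into the $\lambda$-independent conditions and the nonzero-$\lambda$ family, then reduce the latter to full row rank of $\bbm \calA+\calE & \calB \ebm$ via Lemma~\ref{lem:rank_lambda} applied to the padded matrices $\bbm \calA & \calB \ebm$ and $\bbm \calE & 0 \ebm$ (a step the paper leaves implicit), with the ``moreover'' claim following immediately. The only blemish is the stray phrase ``fix $B\in\calP(\calB)$,'' which your argument neither needs nor actually uses, since the $B$-block ranges freely inside $\calP(\bbm \calA & \calB \ebm)$.
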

\begin{proof}
	Note that the rank conditions \eqref{eq:rankEB} hold for all $(E,A,B) \in \mathcal{P}(\mathcal{E}) \times \mathcal{P}(\mathcal{A}) \times \mathcal{P}(\mathcal{B})$ if and only if
	\begin{equation*}
		\rank\bbm E & B \ebm = \rank\bbm A & B \ebm = \rank \bbm A - \lambda E & B \ebm = n
	\end{equation*}
	for all nonzero $\lambda \in \bbC$, \daeclassprod{}. The result then follows from Lemma~\ref{lem:rank_lambda}.
\end{proof}
\begin{remark}
In the special case where $\mathcal{E} = \mathcal{I}$, all systems $(E,A,B) \in \mathcal{P}(\mathcal{I}) \times \mathcal{P}(\mathcal{A}) \times \mathcal{P}(\mathcal{B})$ are regular. In this case, $E\inv$ always exists and we can also write $\eqref{eq:dae_system}$ as
\begin{equation*}
	\dot x = E\inv A x + E\inv B u.
\end{equation*}
Clearly, $E^{-1} \in \mathcal{P}(\mathcal{I})$ for all $E \in \mathcal{P}(\mathcal{I})$. Since $\mathcal{P}(I) \mathcal{P}(\mathcal{A}) = \mathcal{P}(\mathcal{A})$ and $\mathcal{P}(I) \mathcal{P}(\mathcal{B}) = \mathcal{P}(\mathcal{B})$, we see that regular strong structural controllability of $(\mathcal{I},\mathcal{A},\mathcal{B})$ is equivalent to strong structural controllability of $(\mathcal{A},\mathcal{B})$, as defined in \cite{jia2020a}. In fact, in the special case $\mathcal{E} = \mathcal{I}$, the conditions of Theorem \ref{thm:ssdaec} coincide with the conditions for strong structural controllability given in \cite[Theorem 7]{jia2020a}. To see this, note that $\bbm \mathcal{I} & \calB \ebm$ has full row rank for any $\calB$. In addition, the matrix $\bar\calA := \calA + \mathcal{I}$ is the pattern matrix obtained from $\calA$ by changing the diagonal entries of $\calA$ to
\begin{equation*}
	\bar \calA_{kk} =
	\begin{dcases*}
		\ast & if $\calA_{kk} = 0$,\\
		? & otherwise.
	\end{dcases*}
\end{equation*}
As such, Theorem \ref{thm:ssdaec} requires $\bbm \calA &\calB \ebm$ and $\bbm \bar{\calA} & \calB \ebm$ to have full row rank, which are exactly the two conditions of \cite[Theorem 7]{jia2020a}. These conditions are, in fact, necessary and sufficient for controllability of $(\mathcal{A},\mathcal{B})$. The lack of necessity in the characterization of regular strong structural controllability (Theorem \ref{thm:ssdaec}) stems from the fact that generally not all $(E,A,B) \in \mathcal{P}(\mathcal{E}) \times \mathcal{P}(\mathcal{A}) \times \mathcal{P}(\mathcal{B})$ are regular.
\end{remark}

\subsection{Input-state observability}\label{subsec:iso}
In this section, we will use the techniques developed for the analysis of strong structural controllability to characterize another property, namely, input-state observability. Let $\sys$ denote the system
\begin{align*}
	\dot x(t) &= Ax(t) + Bu(t),\\
	y(t) &= Cx(t) + Du(t),
\end{align*}
where $t\geq 0$ represents time, $x(t)\in\bbR^n$ is the state, $u(t)\in\bbR^m$ is the input, $y(t) \in \mathbb{R}^p$ is the output, $A\in\bbR^{n\times n}$, $B\in\bbR^{n\times m}$, $C\in\bbR^{p\times n}$ and $D\in\bbR^{p\times m}$. For a given initial condition $x(0) = x_0\in\bbR^n$ and input function $u$, we denote the corresponding output trajectory at time $t\geq 0$ by $y(t; x_0, u)$. Then we consider the following definition.
\begin{definition}
	The system $\sys$ is \emph{input-state observable} if $y(t;x_1,u_1) = y(t;x_2,u_2)$ for all $t\geq 0$ implies that $x_1 = x_2$ and $u_1(t) = u_2(t)$ for all $t\geq 0$.
\end{definition}

In other words, a system $\sys$ is input-state observable if different initial conditions and inputs can be distinguished on the basis of the output of the system. Conditions under which this is the case are provided in the following theorem.
\begin{theorem}{\cite[Theorem 3.3]{shali2019}}\label{thm:iso}
	The system $\sys$ is input-state observable if and only if
	\begin{equation*}
		\rank \bbm A - \lambda I & B \\ C & D \ebm = n + m
	\end{equation*}
	for all $\lambda \in\bbC$.
\end{theorem}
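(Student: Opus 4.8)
Since Theorem~\ref{thm:iso} is a classical frequency-domain (PBH-type) characterization, my plan is to argue directly on the real matrices $\sys$, without invoking the pattern-matrix machinery of Section~\ref{sec:pattern}. The first step is to reduce to a homogeneous statement: by linearity, input-state observability is equivalent to the assertion that $y(t;x_0,u)=0$ for all $t\ge 0$ forces $x_0=0$ and $u\equiv 0$ (apply the definition to $x_0=x_1-x_2$ and $u=u_1-u_2$). I would then observe that the matrix under study is the Rosenbrock system pencil $P(\lambda)=\bbm A-\lambda I & B\\ C & D\ebm$, which has $n+m$ columns, so the rank condition says precisely that $P(\lambda)$ has \emph{full column rank} for every $\lambda\in\bbC$.

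For necessity I would argue by contraposition. If $\rank P(\lambda_0)<n+m$ for some $\lambda_0\in\bbC$, there is a nonzero vector $\bbm x_0\t & u_0\t\ebm\t$ in $\ker P(\lambda_0)$, i.e.\ $(A-\lambda_0 I)x_0+Bu_0=0$ and $Cx_0+Du_0=0$. Choosing $u(t)=u_0 e^{\lambda_0 t}$ and $x(t)=x_0 e^{\lambda_0 t}$ then solves $\dot x=Ax+Bu$ and yields $y=(Cx_0+Du_0)e^{\lambda_0 t}=0$. Because $A,B,C,D$ are real, the real and imaginary parts of this trajectory are themselves real solutions with zero output, and at least one of the two resulting real pairs (initial state, input) is nonzero since $\bbm x_0\t & u_0\t\ebm\t\neq 0$; this contradicts input-state observability.

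Sufficiency is the substantive direction, and here I would pass to the Laplace domain. Transforming the two equations gives $\bbm sI-A & -B\\ C & D\ebm\bbm X(s)\\ U(s)\ebm=\bbm x_0\\ 0\ebm$, whose coefficient matrix has the same rank as $P(s)$ (negate the top $n$ rows) and is therefore of full column rank for all $s\in\bbC$. A polynomial matrix with full column rank at every point of $\bbC$ has all its invariant factors constant, so by its Smith form it admits a polynomial left inverse $L(s)$; consequently $\bbm X(s)\\ U(s)\ebm=L(s)\bbm x_0\\ 0\ebm$ is a polynomial vector in $s$ depending linearly on $x_0$. On the other hand, $X(s)$ and $U(s)$ are Laplace transforms of genuine signals and hence strictly proper, i.e.\ they tend to $0$ as $s\to\infty$. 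A strictly proper polynomial must be identically zero, so $X\equiv 0$ and $U\equiv 0$, giving $x_0=0$ and $u\equiv 0$.

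The hard part, and my main concern, is exactly this properness/admissibility step: one must justify that the relevant transforms exist and that a transform which is forced to be polynomial must vanish, and this hinges on the admissible input class fixed in the definition from \cite{shali2019} (e.g.\ locally integrable inputs of exponential order, for which the state is continuous and both transforms decay at infinity). If one prefers to avoid transforms altogether, I would instead differentiate $y\equiv 0$ repeatedly and show that full column rank of the resulting block-Toeplitz observability-type matrix is again equivalent to the pencil condition, thereby forcing $x_0$ and all derivatives of $u$ at $0$ to vanish; this purely time-domain route sidesteps the decay argument but instead requires smoothness of $u$.
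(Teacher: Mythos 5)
You should first note that the paper contains no proof of this statement to compare against: Theorem~\ref{thm:iso} is imported verbatim from \cite[Theorem 3.3]{shali2019} and used as a black box, so your argument must stand on its own. Your necessity direction is correct and standard: a kernel vector of the pencil at $\lambda_0$ yields the trajectory $x(t)=x_0e^{\lambda_0 t}$, $u(t)=u_0e^{\lambda_0 t}$ with zero output, and splitting into real and imaginary parts produces a real nonzero (initial state, input) pair that is output-indistinguishable from $(0,0)$. The algebraic core of your sufficiency direction is also sound: full column rank at every $\lambda\in\bbC$ forces all invariant factors of the pencil to be nonzero constants, so the Smith form yields a polynomial left inverse $L(s)$. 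The genuine gap is exactly the step you flag, and it is a gap rather than a formality: the identity $\bbm X(s) \\ U(s) \ebm = L(s)\bbm x_0 \\ 0 \ebm$ and the decay argument presuppose that $u$ (hence $x$) has a Laplace transform, i.e., is of exponential order. The paper's definition places no such restriction on the admissible inputs, so inputs such as $u(t)=e^{t^2}v$ are admissible and your proof says nothing about them. Truncating $u$ to $[0,T]$ to force transforms to exist does not repair this, because the truncated input no longer produces an identically zero output, so the right-hand side of the transformed equation is no longer $\bbm x_0 \\ 0 \ebm$.

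Your time-domain fallback does not close the gap either, for two reasons. First, the claim that full column rank of the block-Toeplitz matrices built from $D$, $CA^kB$ and $CA^k$ is ``equivalent to the pencil condition'' is not a routine verification --- it is essentially the content of the theorem itself (the Sain--Massey/Silverman inversion theory, equivalently the theory of invariant zeros), and it needs careful formulation because those matrices always carry trailing columns for the highest-order derivatives of $u$, so naive full column rank can fail for reasons unrelated to observability. Second, even granting it, vanishing of all derivatives of $u$ at $t=0$ implies $u\equiv 0$ only for analytic inputs; for merely smooth $u$ (think of functions of $e^{-1/t^2}$ type) you must run the argument at every $t\geq 0$ to conclude $x(t)=0$ for all $t$, and then recover $u\equiv 0$ from injectivity of $\bbm B\t & D\t \ebm\t$, which does follow from the rank condition. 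The standard way to make sufficiency rigorous for general (locally integrable) inputs is geometric rather than transform-based: the rank condition forces the weakly unobservable subspace $V^*$ to be zero --- otherwise an eigenvector of $(A+BF)$ restricted to $V^*$, where $F$ satisfies $(A+BF)V^*\subset V^*$ and $(C+DF)V^*=0$, would give a kernel vector of the pencil --- and then $y\equiv 0$ gives $x(t)\in V^*=\{0\}$ for all $t$, hence $Bu\equiv 0$ and $Du\equiv 0$, hence $u\equiv 0$. Replacing your Laplace/properness step with this argument would make the proof complete.
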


As before, instead of considering a single system $\sys$, we consider the family of systems where \classprod{} for given pattern matrices $\calA$, $\calB$, $\calC$ and $\calD$ of appropriate dimensions. We denote this family by $\patsys$ and refer to it as a \emph{structured system}. We are interested in finding necessary and sufficient conditions under which $(A,B,C,D)$ is guaranteed to be input-state observable for all \classprod.
\begin{definition}
	The structured system $\patsys$ is \emph{strongly structurally input-state observable} if $\sys$ is input-state observable for all \classprod{}.
\end{definition}

In view of Theorem~\ref{thm:iso} and the results presented so far, the following characterization of strong structural input-state observability follows naturally.

\begin{theorem}\label{thm:ssiso}
	The structured system $\patsys$ is strongly structurally input-state observable if and only if %the pattern matrices
	\begin{equation}\label{eq:ssiso_pat_marices}
		\bbm \calA & \calB\\ \calC & \calD \ebm \qand \bbm \calA + \calI & \calB \\ \calC & \calD \ebm
	\end{equation}
	have full column rank.
\end{theorem}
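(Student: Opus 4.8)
The plan is to reduce Theorem~\ref{thm:ssiso} to Lemma~\ref{lem:rank_lambda} in essentially the same way that Theorem~\ref{thm:ssdaec} was handled, but working with full \emph{column} rank rather than full row rank and transposing at the end. First I would invoke Theorem~\ref{thm:iso}: the structured system $\patsys$ is strongly structurally input-state observable if and only if the matrix
\begin{equation*}
	M(\lambda) = \bbm A - \lambda I & B \\ C & D \ebm
\end{equation*}
has full column rank $n+m$ for all $\lambda\in\bbC$ and all \classprod{}. The natural move is to separate the case $\lambda = 0$ from the case $\lambda \neq 0$.

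For the case $\lambda = 0$, the matrix reduces to $\bbm A & B \\ C & D \ebm$, and requiring this to have full column rank for all real matrices in the respective pattern classes is exactly the statement that the first pattern matrix in \eqref{eq:ssiso_pat_marices} has full column rank. For the case $\lambda \neq 0$, I would apply Lemma~\ref{lem:rank_lambda}. The obstacle here is that Lemma~\ref{lem:rank_lambda} is stated for a difference of the form $A - \lambda B$ of two matrices drawn from pattern classes $\calP(\calA)$ and $\calP(\calB)$, whereas $M(\lambda)$ has the $-\lambda I$ perturbation sitting only in the top-left block. To align the two, I would write
\begin{equation*}
	M(\lambda) = \bbm \calA & \calB \\ \calC & \calD \ebm^{\!\!\text{(real)}} - \lambda \bbm \calI & 0 \\ 0 & 0 \ebm^{\!\!\text{(real)}},
\end{equation*}
i.e.\ view $M(\lambda)$ as $A' - \lambda B'$ where $A'\in\calP\!\bbm \calA & \calB \\ \calC & \calD \ebm$ and $B'$ ranges over $\calP\!\bbm \calI & 0 \\ 0 & 0 \ebm$. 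Since $\calP(\calI)$ contains exactly the nonsingular diagonal matrices and the zero blocks are forced, this second pattern class is precisely the set of all matrices of the block form $\diag$-times-$I$ in the top-left corner and zero elsewhere, which captures $\lambda I$ up to the nonzero scaling that Lemma~\ref{lem:rank_lambda} already quantifies over.

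The main technical point, and the step I expect to require the most care, is the direction and orientation of the rank condition. Lemma~\ref{lem:rank_lambda} is phrased in terms of full rank of $\calA+\calB$; applied to the block matrices above it yields that $M(\lambda)$ has full column rank for all nonzero $\lambda$ and all matrices in the pattern classes if and only if
\begin{equation*}
	\bbm \calA & \calB \\ \calC & \calD \ebm + \bbm \calI & 0 \\ 0 & 0 \ebm = \bbm \calA + \calI & \calB \\ \calC & \calD \ebm
\end{equation*}
has full column rank, which is exactly the second condition of \eqref{eq:ssiso_pat_marices}. I would need to confirm that the block-diagonal addition behaves as claimed entrywise under the addition of Table~\ref{tab:addition} (the off-diagonal and lower blocks are unaffected since we add $0$ to them), and that Lemma~\ref{lem:rank_lambda}, being stated for full row rank, transfers to full column rank by transposition, so that the two conditions combine cleanly. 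Putting the two cases together, strong structural input-state observability holds if and only if both matrices in \eqref{eq:ssiso_pat_marices} have full column rank, which proves the theorem.
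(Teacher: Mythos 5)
Your overall route is the same as the paper's: invoke Theorem~\ref{thm:iso}, dispose of $\lambda = 0$ via the first pattern matrix in \eqref{eq:ssiso_pat_marices}, and reduce the case $\lambda \neq 0$ to Lemma~\ref{lem:rank_lambda} applied to the block pattern matrices $\bbm \calA & \calB \\ \calC & \calD \ebm$ and $\bbm \calI & 0 \\ 0 & 0 \ebm$. However, there is a genuine gap in the necessity direction of your $\lambda \neq 0$ step. Lemma~\ref{lem:rank_lambda} is an equivalence between full rank of the sum and full rank of $A' - \lambda B'$ for \emph{all} $B'$ in the pattern class $\calP\left(\bbm \calI & 0 \\ 0 & 0 \ebm\right)$, i.e.\ for all perturbations $\bbm \Delta & 0 \\ 0 & 0 \ebm$ with $\Delta$ an arbitrary nonsingular diagonal matrix. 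Theorem~\ref{thm:iso}, by contrast, only supplies full column rank of $\bbm A - \lambda I & B \\ C & D \ebm$, i.e.\ full rank against the one-parameter family of perturbations $\lambda I$. To invoke the ``only if'' half of the lemma you must first show that these two quantified statements are equivalent. Your justification---that the diagonal perturbations are ``captured \ldots{} up to the nonzero scaling that Lemma~\ref{lem:rank_lambda} already quantifies over''---is not correct: $\lambda$ is a single scalar, and $\lambda I$ cannot reproduce a diagonal $\Delta$ with $n$ independent nonzero diagonal entries. As written, your argument establishes only the ``if'' direction of the theorem (which does follow, since the $\lambda I$ perturbations form a subfamily of those the lemma covers), not the ``only if''.

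The missing step is precisely what the paper's proof establishes first. For any $\Delta \in \calP(\calI)$ one has
\begin{equation*}
	\rank \bbm A - \lambda\Delta & B \\ C & D \ebm = \rank \bbm \Delta\inv A - \lambda I & \Delta\inv B \\ C & D \ebm ,
\end{equation*}
and since $\Delta\inv \in \calP(\calI)$ with $\calP(\calI)\calP(\calA) = \calP(\calA)$ and $\calP(\calI)\calP(\calB) = \calP(\calB)$, the matrices $\Delta\inv A$ and $\Delta\inv B$ again lie in $\calP(\calA)$ and $\calP(\calB)$. Hence the rank condition of Theorem~\ref{thm:iso}, quantified over all \classprod{}, is equivalent to the same condition with $I$ replaced by an arbitrary $\Delta \in \calP(\calI)$---and only after this absorption step does Lemma~\ref{lem:rank_lambda} apply in both directions. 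Inserting this argument turns your proposal into the paper's proof; without it, the central ``if and only if'' in your $\lambda \neq 0$ case is unjustified.
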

\begin{proof}
	We claim that $\patsys$ is strongly structurally input-state observable if and only if
	\begin{equation}\label{eq:rank_delta}
		\rank \bbm A - \lambda \Delta& B\\ C & D \ebm = n + m
	\end{equation}
	for all $\lambda\in\bbC$, $\Delta\in\calP(\calI)$, \classprod. Indeed, \eqref{eq:rank_delta} holds if and only if\vspace{-.7mm}
	\begin{equation*}
		\rank \bbm \Delta\inv A - \lambda I & \Delta\inv B \\ C & D \ebm = n+m,
		\vspace{-.7mm}
	\end{equation*}
	where we have $\Delta\inv A\in\calP(\calA)$ and $\Delta\inv B\in\calP(\calB)$ since $\Delta\inv \in\calP(\calI)$, $\calP(\calI)\calP(\calA) = \calP(\calA)$ and $\calP(\calI)\calP(\calB) = \calP(\calB)$. Therefore, $\patsys$ is strongly structurally input-state observable if and only if \vspace{-.7mm}
	\begin{equation*}
		\rank\bbm A & B \\ C & D \ebm = \rank\bbm A & B \\ C & D \ebm - \lambda \bbm \Delta & 0 \\ 0 & 0 \ebm  = n+m
		\vspace{-.7mm}
	\end{equation*}
	for all nonzero $\lambda \in \bbC$, $\Delta\in\calP(\calI)$, \classprod{}. In view of Lemma~\ref{lem:rank_lambda}, the latter holds if and only if the pattern matrices in \eqref{eq:ssiso_pat_marices} have full column rank.
\end{proof}

\subsection{Output controllability}\label{subsec:oc}
In this section, we will show how pattern matrix multiplication and its properties can be used to characterize strong structural output controllability. To this end, consider the system $\sys$ as defined in Section~\ref{subsec:iso}.
\begin{definition}
	The system $\sys$ is \emph{output controllable} if for any $x_0\in\bbR^n$ and $y_1 \in\bbR^p$, there exist a time $T>0$ and an input $u$ such that $y(T;x_0,u) = y_1$.
\end{definition}

The following is a well-known characterization of output controllability of $\sys$, c.f., \cite[Exercise 3.22]{trentelman2001}.
\begin{theorem}\label{thm:oc}
	The system $\sys$ is output controllable if and only if
	\begin{equation*}
		\rank \bbm D & CB & CAB & \cdots & CA^{n-1}B \ebm = p.
	\end{equation*}
\end{theorem}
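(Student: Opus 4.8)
The plan is to reduce output controllability to a statement about a single subspace of $\bbR^p$ and then to identify that subspace with the column space of $\bbm D & CB & CAB & \cdots & CA^{n-1}B \ebm$. I would begin from the variation-of-constants formula for the output,
\begin{equation*}
	y(T;x_0,u) = Ce^{AT}x_0 + C\int_0^T e^{A(T-\tau)}Bu(\tau)\d\tau + Du(T).
\end{equation*}
For a fixed final time $T>0$ the map $u\mapsto y(T;0,u)$ is linear, so its image is a subspace $\calY_T\subset\bbR^p$, and the set of outputs reachable from an arbitrary $x_0$ at time $T$ is the affine set $Ce^{AT}x_0 + \calY_T$. This affine set equals $\bbR^p$ exactly when $\calY_T=\bbR^p$, so the initial state plays no role, and output controllability is equivalent to $\calY_T=\bbR^p$ for some (equivalently, every) $T>0$. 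The equivalence with $x_0=0$ is clean: if $\calY_T=\bbR^p$ then every target is reachable for any $x_0$, and conversely taking $x_0=0$ forces $\calY_T=\bbR^p$ since the $\calY_T$ coincide for all $T>0$.

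The heart of the argument is the computation of $\calY_T$. Writing $\calR=\operatorname{im}\bbm B & AB & \cdots & A^{n-1}B\ebm$ for the reachable subspace of the state equation, the standard controllability argument (positive definiteness of the reachability Gramian together with the Cayley--Hamilton theorem, which truncates the series for $e^{At}$ to the powers $A^0,\dots,A^{n-1}$) shows that the integral term ranges over exactly $\calR$ as $u$ varies, independently of $T>0$. The feedthrough term $Du(T)$ ranges over $\operatorname{im}D$ and can be prescribed independently of the integral, because the value of $u$ at the single instant $\tau=T$ does not affect $\int_0^T e^{A(T-\tau)}Bu(\tau)\d\tau$. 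Hence $\calY_T = C\calR + \operatorname{im}D$, and since $C\calR = \operatorname{im}\bbm CB & CAB & \cdots & CA^{n-1}B\ebm$, we obtain
\begin{equation*}
	\calY_T = \operatorname{im}\bbm D & CB & CAB & \cdots & CA^{n-1}B\ebm.
\end{equation*}
The condition $\calY_T=\bbR^p$ is then precisely $\rank\bbm D & CB & CAB & \cdots & CA^{n-1}B\ebm = p$, which closes both implications.

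The step I expect to be most delicate is the independence of the feedthrough term $Du(T)$ from the steering integral, which must be argued carefully within the admissible input class: one fixes an input on $[0,T)$ that drives the state to the desired element of $\calR$ and then adjusts its terminal value to realize any prescribed $Du(T)$, using that the integral is insensitive to a measure-zero change of $u$ at the endpoint. The remaining ingredients—eliminating the role of $x_0$ and the Cayley--Hamilton truncation identifying $\calR$ with the span of $B,AB,\dots,A^{n-1}B$—are routine.
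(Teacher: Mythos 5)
Your proposal is correct, but there is no internal proof to compare it against: the paper states this theorem as a known result, citing Exercise 3.22 of Trentelman, Stoorvogel and Hautus, and gives no proof of its own. Your argument is the standard one underlying that exercise. The reduction to the single subspace $\mathcal{Y}_T = C\mathcal{R} + \operatorname{im} D = \operatorname{im}\begin{bmatrix} D & CB & CAB & \cdots & CA^{n-1}B \end{bmatrix}$ is exactly right, and you correctly identify and handle the two genuinely delicate points. First, since the definition of output controllability allows $T$ to depend on $x_0$ and $y_1$, the ``only if'' direction really does need the $T$-independence of $\mathcal{Y}_T$: an uncountable union of proper subspaces can cover $\mathbb{R}^p$, so the bare inclusion $y_1 \in \bigcup_{T>0}\mathcal{Y}_T$ for all $y_1$ would not suffice without knowing all $\mathcal{Y}_T$ coincide, which your computation supplies. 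Second, realizing the feedthrough value $Du(T)$ independently of the steering integral requires an input class that tolerates modifying $u$ at the single instant $T$ (piecewise continuous or locally integrable inputs), which you flag explicitly; if the admissible inputs were required to be continuous, the same conclusion follows by approximating near the endpoint and using that $\mathcal{Y}_T$, being a subspace of $\mathbb{R}^p$, is closed. One small wording caveat: the reachability Gramian $\int_0^T e^{A\sigma}BB^\top e^{A^\top \sigma}\,\mathrm{d}\sigma$ is positive definite only when $(A,B)$ is controllable; the fact you actually need is that its image equals $\mathcal{R}$ for every $T>0$, which is what the Gramian-plus-Cayley--Hamilton argument delivers in general, so your conclusion that the integral term ranges over exactly $\mathcal{R}$ is unaffected.
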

\vspace{1mm}

Now, consider the structured system $\patsys$ as defined in Section~\ref{subsec:iso}.
\begin{definition}
	The structured system $\patsys$ is \emph{strongly structurally output controllable} if $\sys$ is output controllable for all \classprod.
\end{definition}

We are interested in conditions under which $\patsys$ is strongly structurally output controllable. Note that the condition for output controllability of $\sys$ involves products of system matrices, unlike the conditions for controllability of $\daesys$ or input-state observability of $\sys$. This suggest that we need to consider products of pattern matrices when investigating strong structural output controllability of $\patsys$. Unfortunately, since products of pattern matrices do not share the same favourable property as sums, i.e., $\calP(\calA\calB) \neq \calP(\calA)\calP(\calB)$, we cannot easily derive necessary and sufficient conditions. Nevertheless, we state and prove the following sufficient condition.
\begin{theorem}\label{thm:ssoc}
	The structured system $\patsys$ is strongly structurally output controllable if\vspace{-1mm}
	\begin{equation*}
		\bbm \calD & \calC\calB & \calC\calA\calB & \cdots & \calC\calA^{n-1}\calB \ebm\vspace{-1mm}
	\end{equation*}
	has full row rank.
\end{theorem}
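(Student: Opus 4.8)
The plan is to reduce the statement to Theorem~\ref{thm:oc} together with the defining inclusion for pattern matrix products. Recall that by Theorem~\ref{thm:oc}, a fixed system $\sys$ is output controllable if and only if the real matrix $\bbm D & CB & CAB & \cdots & CA^{n-1}B \ebm$ has full row rank $p$. Thus $\patsys$ is strongly structurally output controllable precisely when this real matrix has full row rank for \emph{every} choice of \classprod{}. The goal is therefore to show that full row rank of the pattern matrix $\calM := \bbm \calD & \calC\calB & \calC\calA\calB & \cdots & \calC\calA^{n-1}\calB \ebm$ forces full row rank of the corresponding real matrix for all such choices.

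First I would fix arbitrary \classprod{} and assemble the real matrix $M := \bbm D & CB & CAB & \cdots & CA^{n-1}B \ebm$. The key observation is that each block of $M$ arises as a product of the real matrices $A$, $B$, $C$, $D$, whereas the corresponding block of $\calM$ is the pattern product of the pattern matrices. By the inclusion $\calP(\calX\calY) \supset \calP(\calX)\calP(\calY)$ established just before Proposition~\ref{prop:patsum} for products (and applied repeatedly to handle the iterated products $\calC\calA^{k}\calB$), each real block lies in the pattern class of the corresponding pattern block: $D \in \calP(\calD)$ by assumption, and $CA^{k}B \in \calP(\calC\calA^{k}\calB)$ for each $k$. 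Concatenating these inclusions block-wise gives $M \in \calP(\calM)$.

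The heart of the argument is then immediate: since $\calM$ has full row rank by hypothesis --- meaning, by the definition of full rank for pattern matrices, that \emph{every} real matrix in $\calP(\calM)$ has full row rank --- and since $M \in \calP(\calM)$, the real matrix $M$ has full row rank $p$. By Theorem~\ref{thm:oc} the fixed system $\sys$ is output controllable. As the choice of \classprod{} was arbitrary, every system in the family is output controllable, which is the definition of strong structural output controllability.

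The only step requiring genuine care is the verification that $CA^{k}B \in \calP(\calC\calA^{k}\calB)$ for the iterated products, since the product inclusion was stated for a single product of two pattern matrices. I would handle this by induction on the number of factors, using associativity of both real matrix multiplication and pattern matrix multiplication together with the basic product inclusion at each step; this is the main (though routine) obstacle, and it is exactly where the failure of the reverse inclusion --- illustrated in Example~1 and responsible for the condition being only sufficient rather than necessary --- plays no role, because we only ever need the ``$\supset$'' direction here.
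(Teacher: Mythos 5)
Your proposal is correct and follows essentially the same route as the paper: the paper also fixes arbitrary \classprod{}, uses the product inclusion $\calP(\calC)\calP(\calA)^k\calP(\calB) \subset \calP(\calC\calA^k\calB)$ (justified by induction, exactly as you do), concatenates block-wise to place the real output-controllability matrix in the pattern class of the pattern matrix, and concludes via Theorem~\ref{thm:oc}. Your write-up is, if anything, slightly more careful in stating the membership as $M \in \calP(\calM)$ rather than the paper's set-inclusion notation.
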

\begin{proof}
	Let \classprod. Recall that $\calP(\calC)\calP(\calB) \subset \calP(\calC\calB)$, that is, $CB\in\calP(\calC\calB)$ for all $C\in\calP(\calC)$ and $B\in\calP(\calB)$. By induction, it follows that $\calP(\calC)\calP(\calA)^k\calP(\calB) \subset \calP(\calC\calA^k\calB)$ for all positive integers $k$. In other words, we have that\vspace{-2mm}
	\begin{multline*}
   		\bbm D & CB & CAB & \cdots & CA^{n-1}B \ebm\\ \subset \calP(\bbm \calD & \calC\calB & \calC\calA\calB & \cdots & \calC\calA^{n-1}\calB \ebm),\vspace{-1mm}
	\end{multline*}
	hence $\sys$ is output controllable due to Theorem~\ref{thm:oc}. As $A$, $B$, $C$ and $D$ were chosen arbitrarily, it follows that $\patsys$ is strongly structurally output controllable.
\end{proof}

As already mentioned in the introduction, strong structural output controllability is closely related to strong structural target controllability of networked systems. To show this, we will follow the exposition in \cite{monshizadeh2015}. Consider the graph $G = (V,E)$ with vertex set $V=\{1,\dots,n\}$ and edge set $E\subset V\times V$. The \emph{qualitative class} $Q(G)$ of $G$ is defined as
\begin{equation*}
	Q(G) = \set{A\in\bbR^{n\times n}}{\text{for }i\neq j,\ A_{ij}\neq 0 \Leftrightarrow (j,i)\in E}.
\end{equation*}
For subsets $V_r, V_c\subset V$, let $P(V_r;V_c)$ denote the submatrix of the $n\times n$ identity matrix whose rows and columns are indexed by $V_r$ and $V_c$, respectively. Now, consider a \emph{leader set} $V_L\subset V$ and a \emph{target set} $V_T\subset V$. The triple $(G; V_L; V_T)$ defines the family of systems $(A,\bar{B},\bar{C},0)$, where $A\in Q(G)$, $\bar B = P(V;V_L)$ and $\bar C = P(V_T;V)$. The triple $(G; V_L; V_T)$ is said to be \emph{strongly structurally target controllable} if $(A,\bar{B},\bar{C},0)$ is output controllable for all $A\in Q(G)$. This already suggests a connection between strong structural target controllability and strong structural output controllability. To make this explicit, let $\calA\in\pattern^{n\times n}$ be such that
\begin{equation*}
	\calA_{ij} =
	\begin{dcases*}
		? & if $i = j$,\\
		* & if $i\neq j$ and $(j,i)\in E$,\\
		0 & otherwise,
	\end{dcases*}
\end{equation*}
and note that $\calP(\calA) = Q(G)$. Moreover, let $\calB$ and $\calC$ be the pattern matrices obtained from $\bar B$ and $\bar C$ by replacing all $1$'s with $*$'s. Given the special structure of $\bar B$ and $\bar C$, any matrix $B\in\calP(\calB)$ can be obtained from $\bar B$ by an appropriate nonzero scaling of its columns and any matrix $C\in\calP(\calC)$ can be obtained from $\bar C$ by an appropriate nonzero scaling of its rows. Since the rank of a matrix is invariant under nonzero scaling of its rows and columns, it follows that
\begin{equation*}
	\rank \bbm \bar{C}\bar{B} & \bar{C}A\bar{B} & \cdots & \bar{C}A^{n-1}\bar{B} \ebm = p
\end{equation*}
for all $A\in Q(G)$ if and only if
\begin{equation*}
	\rank \bbm C B & C A C & \cdots & C A^{n-1} B \ebm  = p
\end{equation*}
for all $A\in\calP(\calA)$, $B \in \calP(\calB)$ and $C \in \calP(\calC)$. This implies that $(G;V_L;V_T)$ is strongly structurally target controllable if and only if $(\calA,\calB,\calC, 0)$ is strongly structurally output controllable, hence we can use Theorem~\ref{thm:ssoc} to check for strong structural target controllability. In fact, Theorem~\ref{thm:ssoc} can reveal that $(G;V_L;V_T)$ is strongly structurally target controllable in cases where the theorems in \cite{monshizadeh2015} are inconclusive. We demonstrate this in the following example, which is borrowed from \cite{monshizadeh2015}.
\begin{example}\label{ex:target}
	Consider the graph $G(V,E)$ depicted in Figure~\ref{fig:network}. Let $V_L = \{1,2\}$ and $V_T=\{1,\dots,7\}$ be the leader and targets sets, respectively.  Let $\calA$ be the pattern matrix for which $\calP(\calA) = Q(G)$ and let $\calB$ and $\calC$ be the pattern matrices obtained from $P(V;V_L)$ and $P(V_T;V)$ by replacing all $1$'s with $*$'s. With $\calD = 0$, the matrix in Theorem~\ref{thm:ssoc} is given by\vspace{.7mm}
	\begin{equation*}
		\bbm
		0 & 0 & * & 0 & ? & * & ? & ? & ? & \cdots\ \\
		0 & 0 & 0 & * & * & ? & ? & ? & ? & \cdots\ \\
		0 & 0 & 0 & 0 & * & * & ? & ? & ? & \cdots\ \\
		0 & 0 & 0 & 0 & 0 & * & ? & ? & ? & \cdots\ \\
		0 & 0 & 0 & 0 & 0 & 0 & * & ? & ? & \cdots\ \\
		0 & 0 & 0 & 0 & 0 & 0 & 0 & * & ? & \cdots\ \\
		0 & 0 & 0 & 0 & 0 & 0 & 0 & 0 & * & \cdots
		\ebm,\vspace{.7mm}
	\end{equation*}
	which has full row rank due to the upper triangular structure when the first two columns are neglected. In view of Theorem~\ref{thm:ssoc} and the discussion above, we conclude that $(G;V_L;V_T)$ is strongly structurally target controllable. Note that the authors of \cite{monshizadeh2015} could not make this conclusion, as explained in the last paragraph of \cite[Section~VI]{monshizadeh2015}.
\end{example}

\begin{figure}
	\centering
	\scalebox{0.75}{
	\begin{tikzpicture}[scale = 0.8, ->,>=stealth',shorten >=1pt,auto,node distance=2.8cm,
		semithick]
		\tikzset{VertexStyle1/.style = {shape = circle,
				color=black,
				fill=white!96!black,
				minimum size=0.5cm,
				text = black,
				inner sep = 2pt,
				outer sep = 1pt,
				minimum size = 18pt,
				draw}
		}
		\tikzset{VertexStyle2/.style = {shape = circle,
				color=black,
				fill=black!60!white,
				minimum size=0.5cm,
				text = white,
				inner sep = 2pt,
				outer sep = 1pt,
				minimum size = 18pt,
				draw}
		}

		\foreach \name/\x/\y in {3/2/-1, 4/2/1, 5/4/-1, 6/4/1, 7/5/0, 8/6/1, 9/6/-1}
		\node[VertexStyle1] (\name) at (\x,\y) {$\name$};

		\foreach \name/\x/\y in {1/0/-1, 2/0/1}
		\node[VertexStyle2] (\name) at (\x,\y) {$\name$};

		\foreach \from/\to in {1/3, 2/4, 3/5, 4/6, 5/7, 2/3, 4/5}
		\draw (\from) -- (\to);

		\foreach \from/\to in {}
		\draw[dashed] (\from) -- (\to);

		\draw (2) to [out=-65, in=65] (1);
		\draw (1) to [out=115, in=-115] (2);
		\draw (4) to [out=-65, in=65] (3);
		\draw (3) to [out=115, in=-115] (4);

        \draw (3) to [out=-30,in=-150,looseness=1] (9);
        \draw (4) to [out=30,in=150,looseness=1] (8);
	\end{tikzpicture}}
    \vspace{-2mm}
	\caption{The graph $G=(V,E)$.}
	\label{fig:network}
    \vspace{-7mm}
\end{figure}
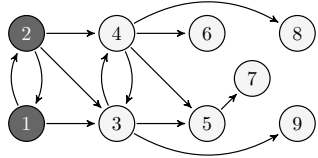

%%%%%%%%%%%%%%%%%%%%%%%%%%%%%%%%%%%%%%%%%%%%%%%%%%%%%%%%%%%%%%%%%%%%%%%%%%%%

\section{Conclusion}\label{sec:conclusion}
In this paper, we adopted and expanded the framework for strong structural controllability introduced in \cite{jia2020a} in order to study structured systems and their strong structural properties. In particular, we defined addition and multiplication for the set of symbols $\pattern$, which allowed us to define addition and multiplication for pattern matrices with entries in the set $\pattern$. These definitions are such that the pattern class of a sum (product) of pattern matrices is contained in the sum (product) of their pattern classes. We showed that the converse is true for sums, but generally not true for products. Using these operations and their properties, we extended the results on strong structural controllability from \cite{jia2020a} to linear descriptor systems. Furthermore, we characterized strong structural input-state observability and output controllability of linear systems. We also showed that our results on strong structural output controllability can be used in the context of networked systems to verify strong structural target controllability. We note that by defining the graph associated to a pattern matrix (see \cite{jia2020a}), the algebraic characterizations presented in this paper can be translated to graph-theoretic ones, which can be more insightful in the context of networked systems. Finally, it would be worthwhile to investigate extensions of our work to more general classes of structured systems, e.g., structured systems that allow given nonzero or arbitrary entries to be constrained to take identical values (see \cite{jia2020b, jia2020c}).

\bibliographystyle{ieeetr}
\bibliography{references}
\end{document}